\newcommand{\vp}{v^+}
\newcommand{\vm}{v^-}
\newcommand{\fpen}{f^{pen}}
\newcommand{\xb}{\bar{x}}
\newcommand{\yb}{\bar{y}}
\newcommand{\xn}{x^0}
\newcommand{\yn}{y^0}
\newcommand{\modf}{\tilde{f}}
\begin{document}

\title{A bicriteria perspective on $L$-Penalty Approaches -- A corrigendum to Siddiqui and Gabriel's $L$-Penalty Approach  
for Solving MPECs
}

\titlerunning{Corrigendum of Siddiqui and Gabriel's $L$-penalty approach}        

\author{D{\"a}chert, Kerstin
\and
Siddiqui, Sauleh
\and
Saez-Gallego, Javier
\and
Gabriel, Steven A.
\and
Morales, Juan Miguel
}

\institute{
K. D{\"a}chert \at
University of Duisburg-Essen, Faculty of Economics and Business Administration, 
		    Chair for Management Science and Energy Economics\\		    
		    Berliner Platz 6-8, 45127 Essen, Germany\\
              \email{kerstin.daechert@uni-due.de}
\and
S. Siddiqui \at
              Departments of Civil Engineering and Applied Mathematics \& Statisitics, Johns Hopkins University \\
3400 N Charles Street Baltimore, MD 21218  \\
\email{siddiqui@jhu.edu}
\and
J. Saez-Gallego \at
              Siemens Wind Power A/S \\
Borupvang 9, 2750 Ballerup, Denmark \\
\email{javiersaezgallego@gmail.com}
\and
S.A. Gabriel \at
               Department of Mechanical Engineering, 
               Applied Mathematics, Statistics, and Scientific Computation Program,
               University of Maryland, College Park, MD, 20742-3021 USA\\
              \email{sgabriel@umd.edu}
\and
J.M. Morales \at
University of M{\'a}laga\\
Department of Applied Mathematics, School of Industrial Engineers, room 2.556L-B, M{\'a}laga, Spain\\
              \email{juan.morales@uma.es}
}

\date{April 2nd, 2017}

\maketitle

\begin{abstract}
This paper presents a corrigendum to Theorems~2 and~3 in \citet{Gabriel2013}. 
In brief, we revise the claim that their $L$-penalty approach yields a solution satisfying complementarity for any positive value of~$L$, in general.
This becomes evident when interpreting the $L$-penalty method as a weighted-sum scalarization of a bicriteria optimization problem. 
We also elaborate further assumptions under which the $L$-penalty approach yields a solution satisfying complementarity. 

\keywords{Equilibrium problems \and MPEC \and $L$-penalty method \and Bicriteria optimization}
\end{abstract}

\section{Preface}
In this paper we follow the notation of \citet{Gabriel2013}. 
However, in order to make this paper self-contained, we briefly repeat the relevant formulas from \citet{Gabriel2013} in the next section indicating by an asterisk those coming from the original paper.  

\section{Introduction}
Following the approach in \citet{Gabriel2013}, we consider a mathematical program with equilibrium constraints given by
\begin{align} \label{eq:compl}
\text{min} \ & f(x,y) \notag
\\
\text{s.t.} \  & (x,y)\in \Omega \tag{$1^{\star}$} 
\\
& y\in S(x) \notag
\end{align}
where the continuous variables $x \in \mathcal{R}^{n_x}$ and $y \in \mathcal{R}^{n_y}$
are, respectively, the vector of upper-level and lower-level variables,
$f(x,y)$ is the upper-level single-objective function,  
$\Omega$ is the joint feasible region between these sets of variables and 
$S(x)$ is the solution set of the lower-level problem that can take the form of an optimization problem, a nonlinear complementarity problem (NCP), or a variational inequality problem \citep{luo1996mathematical}. 

The main focus of \citet{Gabriel2013}  is when $S(x)$ is the solution set of an NCP.
Then \eqref{eq:compl} can be rewritten as
\begin{align} \label{eq:mpecorig}
\text{min} \ & f(x,y) \notag
\\
\text{s.t.} \ & (x,y)\in \Omega \notag
\\
& y \ge 0 \tag{$3^{\star}$}
\\ 
& g(x,y)\ge 0\hfill\null  \notag
\\
& y^{\top} g(x,y)=0 \notag
\end{align}
where 
$g(x,y): \mathcal{R}^{n_x} \times \mathcal{R}^{n_y} \to \mathcal{R}^{n_y}$
is a vector-valued function. 
We often make use of the shorthand notation $f=f(x,y)$, $g=g(x,y)$ for convenience. 

The set $y^{\top} g=0$ is non-convex in $x,y$ and can be computationally challenging to find even if $g$ is linear. 
In this case, the MPEC~\eqref{eq:mpecorig} can be reformulated using  
Schur's decomposition,
see 
($6^{\star}$), ($8^{\star}$) and ($9^{\star}$) of \citet{Gabriel2013}, 
where, in brief, new variables $u$ and $v$ are introduced by setting 
$u=(y+g)/2$  
and
$v=(y-g)/2$.
Then $y=u+v$ and $g=u-v$, thus, 
$y_{i}g_{i}=(u_{i}+v_{i})(u_{i}-v_{i})=u_{i}^2-v_{i}^2$
for every $i=1,\dots,{n_y}$.
Since $y \geq 0$ and $g\geq 0$, it follows that $u \geq 0$.
Hence, $u_{i}^2-v_{i}^2=0$ is equivalent to $u_{i}-|v_{i}|=0$.
If $v$ is replaced by two non-negative variables, i.e., $v=\vp-\vm$ with $\vp,\vm \geq 0$, where (component-wise) at most one is non-zero, the absolute value can be expressed as $|v|=\vp+\vm$.  
The corresponding reformulation with SOS~1 variables (special ordered sets of type~1, 
defined as a set of non-negative variables 
of which at most one can take a strictly positive value) reads
\begin{align} \label{eq:sos}
\min \ & f(x,y)  \notag
\\
 \text{s.t.} \  & (x,y)\in \Omega \notag
\\
& y \ge 0 \notag
\\
& g(x,y)\ge 0 \notag
\\
& u-(v^+ + v^-) = 0 \tag{$9^{\star}$}
\\
& u=\frac{y+g(x,y)}{2}	\notag
\\
& v^+ - v^- = \frac{y-g(x,y)} 2	\notag
\\
& \text{where } v^+ , v^- \text{ are SOS~1 variables.} \notag
\end{align}
Formulation~\eqref{eq:sos} is a viable way to solve the MPEC~\eqref{eq:compl} as shown in \citet{Gabriel2013}.
They also propose an $L$-penalty method of the form
\begin{align} \label{eq:penL}
\min \ & f(x,y) + \sum_{i=1}^{n_y} L_i(v_i^+ + v_i^-) \notag
\\
 \text{s.t.} \  & (x,y)\in \Omega \notag
\\
& y \ge 0 \notag
\\
& g(x,y)\ge 0 \notag
\\
& u-(v^+ + v^-) = 0 \tag{$10^{\star}$}
\\
& u=\frac{y+g(x,y)}{2}	\notag
\\
& v^+ - v^- = \frac{y-g(x,y)} 2	\notag
\\
& \text{where } v^+ , v^- \text{ are non-negative variables,} \notag
\end{align}
and $L_{i}>0$ for all $i =1,\dots,n_{y}$.  
Compared to~\eqref{eq:sos} the SOS~1 property is relaxed and instead a new term weighted by parameters $L_{1},\dots,L_{n_{y}}$ is added to the objective function.
In what follows we use a scalar 
$L=L_{1}=\dots=L_{n_{y}}$
whenever a distinction by different parameter values is not necessary.

Since we can replace $\vp_{i}+\vm_{i}$ in the objective function by 
$u_{i}=(y_{i}+g_{i})/2$,
the auxiliary variables $u, \vp, \vm$ 
are not required in \eqref{eq:penL} and can thus be removed.
This yields the simplified but equivalent $L$-penalty formulation~\eqref{eq:penL2}.
\begin{align} \label{eq:penL2}
\min \ & f(x,y) + \sum_{i=1}^{n_y} L_i \cdot \frac{y_i + g_i(x,y)}{2} \notag
\\
 \text{s.t.} \  & (x,y)\in \Omega \notag
\\
&y \ge 0 \tag{$10^{\star}b$}
\\
&g(x,y) \ge 0. \notag
\end{align}
Theorem~2 of~\citet{Gabriel2013} states that if problem~\eqref{eq:sos} has a solution and if the KKT-conditions are both necessary and sufficient for~\eqref{eq:penL}, then for any $L_i>0$ and for each $i$, problem~\eqref{eq:penL} has a solution where at most one of $(\vp)_i$ and $(\vm)_i$ is nonzero 
(i.e., the SOS~1 property holds).
Translated to~\eqref{eq:penL2} this implies that for any $L_i>0$ and for each $i$, either $y_{i}$ or $g_{i}$ or both are zero, i.e., complementarity is satisfied. 
This is because $\vp_{i}=\frac{y_{i}}{2}$ and $\vm_{i}=\frac{g_{i}}{2}$. 
Unfortunately, Theorem~2 does not hold. 
The same applies for Theorem~3 of~\citet{Gabriel2013}, which states that 
if the feasible set of~\eqref{eq:penL} is non-empty and if the maximum of 
$\sum_{i=1}^{n_y} \vp_i + \vm_i=\sum_{i=1}^{n_y} (y_i + g_i)/2$ 
over this set exists, 
then there exists an $L>0$ so that for all positive $\hat{L} \leq L$ a solution to problem~\eqref{eq:penL} with penalty~$\hat{L}$ also solves~\eqref{eq:sos}, i.e., satisfies complementarity. 

The rest of the corrigendum is structured as follows. 
In Section~\ref{sec:counterexamples} we present counter-examples to Theorems~2 and~3 of~\citet{Gabriel2013}.
In Section~\ref{sec:bicrit} the failure of the theorems is analyzed from a bicriteria perspective.
Section~\ref{sec:repair} contains a new theorem that guarantees solutions of the proposed $L$-penalty method satisfying complementarity under certain assumptions.  
Conclusions are summarized in Section~\ref{sec:conclusions}.

\section{Counter-examples } \label{sec:counterexamples}

\subsection{Counter-example to Theorem~2}
\begin{example}[Non-complementarity for certain values of $L$] \label{ex:1}
Consider the following instance of Problem~\eqref{eq:compl} with a parameter $K>0$:
\begin{subequations} \label{eq:bil}
\begin{alignat}{2}
\min \; & f(x,y)=x_3 + x_4 + x_5 + x_6  \\
s.t. \; & x_3 - x_4  = x_1-7 \label{eq:abs1}\\
         &x_5 - x_6   = x_2-3   \label{eq:abs2}\\
         &y_1            = 3 \label{eq:stat1} \\
        &-4 + y_1 + y_2 - K y_3 = 0 \label{eq:stat2} \\
       &g_{1}(x,y) = 10 - x_1 - x_2  \geq 0  \label{eq:comp1} \\
      &g_{2}(x,y) = x_7 - x_2   \geq 0 \label{eq:comp3}\\
     &g_{3}(x,y) = Kx_2   \geq 0 \label{eq:comp2} \\
    &y^{\top} g = 0 \label{eq:comp}\\
    &x,y \geq 0
\end{alignat}
\end{subequations}
The unique optimal solution to \eqref{eq:bil} is 
$x= \left(7, 3, 0, 0, 0, 0, 3 \right), y = \left(3, 1, 0 \right)$, 
which can be seen as follows.
Since $x$ is non-negative, the objective function value is bounded below by zero which is attained if and only if $x_3 = x_4 = x_5 = x_6 = 0$, assuming that it is part of a feasible solution. 
Then, \eqref{eq:abs1} yields $x_1 = 7$ and 
\eqref{eq:abs2} yields $x_2 = 3$. 
Furthermore, 
\eqref{eq:comp1} yields $g_{1} = 0$ and
\eqref{eq:comp2} yields $g_{3} = 3 \cdot K>0$.
Hence, to obtain a solution for which the complementarity conditions hold, i.e.\ to satisfy \eqref{eq:comp}, 
$y_{3}=0$ must hold while $y_{1} \geq 0$ and, in particular,  
$y_{1}=3$ so that constraint \eqref{eq:stat1} is satisfied.
From \eqref{eq:stat2} we obtain $y_{2}=1$. Hence, $g_{2}=0$ must hold which implies $x_{7}=x_{2}=3$. 
Summarizing, $x= \left(7, 3, 0, 0, 0, 0, 3 \right), y = \left(3, 1, 0 \right)$ is the unique optimal solution for any $K>0$.

We recast problem~\eqref{eq:bil} using the $L$-penalty reformulation~\eqref{eq:penL2}
with $L=L_{i}, i=1,2,3$,
and $K=10$: 
\begin{subequations} \label{eq:penalty}
\begin{alignat}{2}
\min \; &x_3 + x_4 + x_5 + x_6 
+ \frac{L}{2}  \left(y_{1}+ y_{2}+y_{3} -x_{1} + 8x_{2} + x_{7} +10 \right) \label{eq:objfunexo1}\\
s.t. \; &x_3 - x_4 = x_1-7 \label{eq:abs1b}\\
&x_5 - x_6 = x_2-3   \label{eq:abs2b}\\
&y_1  = 3 \label{eq:stat1b} \\
&y_{1}+y_2 - 10 y_3 = 4 \label{eq:stat2b} \\
&g_{1}(x,y) = 10 - x_1 - x_2 \geq 0  \label{eq:comp1b} \\
&g_{2}(x,y) = x_7 - x_2  \geq 0  \label{eq:comp3b} \\
&g_{3}(x,y) = 10 x_2  \geq 0 \label{eq:comp2b} \\
&x,y \geq 0.
\end{alignat}
\end{subequations}
For $L=1$ we obtain $x=(7,0,0,0,0,3,0)$ and $y=(3,1,0)$ as an optimal solution, which yields $g(x,y)=(3,0,0)$. Since $y_{1} \neq 0$ and $g_{1} \neq 0$, this solution does not satisfy complementarity, a contradiction to Theorem~2 of \citet{Gabriel2013}.

{\it What goes wrong in the proof of Theorem~2?}
In the proof from the original paper the KKT conditions for \eqref{eq:penL} are formulated. 
It is shown that, 
if $\vp_{i}$ and $\vm_{i}$ are strictly positive, 
$L_{i}=-(\lambda_{4})_{i}$ where $\lambda_{4}$ denotes the Lagrange multiplier associated with the constraint $\vp_{i}+\vm_{i}-\frac{y_{i}+g_{i}}{2}=0$. 
The term that corresponds to $L_{i}$ in the objective function is cancelled out and it is claimed that an equivalent optimization problem for this recast KKT system exists that contains the constraint $\frac{y_{i}+g_{i}}{2}=0$
instead of $\vp_{i}+\vm_{i}-\frac{y_{i}+g_{i}}{2}=0$.
Equivalence, however, is only true as long as the KKT solution remains feasible at optimality,
which is not the case when $\vp_{i}>0, \vm_{i}>0$ and $\frac{y_{i}+g_{i}}{2}=0$. 
For Example~\ref{ex:1} and  $L=1$, 
$y_{1}+g_{1} = 6$ and, therefore, the KKT solution of the original problem is not feasible.    

\subsection{Counter-example to Theorem~3}
The next counter-example shows that the $L$-penalty approach might yield a solution not satisfying complementarity for any $L>0$
in contradiction to Theorem~3. 
\begin{example}[Non-complementarity  for every $L >0$] \label{ex:noL}
\begin{equation} \label{ex:2} 
\begin{array}{ll} 
\min & - y \\
\mbox{s.t.} & y \leq 4 \\
& g(x,y)=10-2 y \geq 0\\
& y \geq 0\\
& y  \cdot g =0
\end{array}   
\end{equation}
Since $g \geq 2$, by complementarity the unique optimal solution (with respect to $y$) for \eqref{ex:2} is $y=0$. Consider the objective function of the $L$-penalty method: 
\begin{equation} \label{exo2pen} 
\displaystyle 
f(x,y)+L\cdot \fpen(x,y) = -y + \frac{L}{2} \left( y+10-2y \right)
=-\left(1+ \frac{L}{2}\right)y+5 L. 
\end{equation}
For every $L>-2$, the optimal solution of the $L$-penalty approach is $y=4$, combined with $g\ge2$ thus, $ y \cdot g  \neq 0$.
It can be easily verified that the example satisfies the assumptions of Theorem~3 from \citet{Gabriel2013} which, applied to this example, state that $\max \{ 5 -\frac{y}{2} : y \leq 4, g(x,y)=10-2y \geq 0, y \geq 0 \} $ admits a finite optimal solution with a positive objective function value. 
\end{example}
The failure of Theorems~2 and~3 of \citet{Gabriel2013}  is explained from a bicriteria perspective in the next section. 

\section{Bicriteria interpretation of the $L$-penalty formulation} \label{sec:bicrit}
We can interpret the $L$-penalty formulation~\eqref{eq:penL2} 
\begin{align*} 
\min \ & f(x,y) + L \cdot \sum_{i=1}^{n_y}   \frac{y_i + g_i(x,y)}{2} 
\\
 \text{s.t.} \  & 
(x,y)\in \Gamma
\end{align*}
with 
feasible set $\Gamma = \{(x,y) : (x,y) \in \Omega, y \ge 0,  g(x,y) \ge 0 \}$
and 
a scalar parameter~$L$ 
as the weighted-sum formulation of a bicriteria optimization problem of the form
\begin{align} \label{bicrit} 
\min \ & f(x,y) \notag \\ 
\min \ & \fpen(x,y) \\
 \text{s.t.} \  & 
(x,y)\in \Gamma \notag,
\end{align}
where 
\begin{equation} \label{eq:penaltyterm}
\fpen(x,y)=\displaystyle \sum_{i=1}^{n_y} \frac{y_{i}+g_{i}(x,y)}{2}.
\end{equation}
The objective function of \eqref{eq:penL2} 
consists of two terms: the original objective $f(x,y)$
and the $L$-penalty term $\fpen$ 
which is weighted by~$L$.
When the two objective functions $f$ and $\fpen$ are conflicting, there does not exist a single solution that optimizes both objectives simultaneously. 
Instead we deal with a set of solutions that cannot be improved with respect to one criterion without being deteriorated with respect to the other criterion. 
Such solutions are called {\it efficient}, their outcomes {\it nondominated}. 
In the following, we briefly state common notions from multicriteria optimization in the context of the specific bicriteria problem with objectives $f$ and $\fpen$.  
\begin{definition}[{Efficiency/Nondominance}]
Consider the bicriteria optimi\-zation problem~\eqref{bicrit}.
A solution $(\xb, \yb) \in \Gamma$ is called {\it efficient}, and its outcome
$(f(\xb, \yb),\fpen(\xb, \yb))^{\top}$ is called {\it nondominated} 
 if there does not exist a solution $(x,y) \in \Gamma$ such that
$f(x,y) \leq f(\xb, \yb)$ \emph{and} 
$\fpen(x,y) \leq \fpen(\xb, \yb)$
with at least one strict inequality.  
\end{definition}

\begin{definition}[Lexicographic minimum]
Let $(\xb, \yb) \in \Gamma$ be an optimal solution of 
$\min_{(x,y) \in \Gamma} f(x,y)$.
We call an optimal solution of 
\begin{align*} \label{eq:lexmin}
\min \ & \fpen(x,y)  \notag
\\
 \text{s.t.} \  
& f(x,y)=f(\xb, \yb)
\\
& (x,y)\in \Gamma \notag
\end{align*}
{\it lexicographic minimum} with respect to $(f,\fpen)$.
\end{definition}
The lexicographic minimum with respect to $(\fpen,f)$ is determined analogously. Note that every lexicographically optimal solution is efficient.
For more details on these basic notions from multicriteria optimization we refer to the textbooks of \citet{chankong83} and \citet{ehrgott05}.

If the problem is linear we can easily determine the nondominated set in the following way (see, e.g., \citet{aneja79} for a principle description of the procedure). 
Therefore, consider again the $L$-penalty formulation \eqref{eq:penalty} from Example~\ref{ex:1} for $K=10$.  

We first compute the two {\it lexicographic minima} with respect to $(f,\fpen)$ and $(\fpen,f)$.

Minimizing only $f$ yields 
$x_3 = x_4 = x_5 = x_6 = 0$, 
$x_1 = 7$ from \eqref{eq:abs1b} and 
$x_2 = 3$ from \eqref{eq:abs2b}. 
Then, $x_{7} \geq 3$ from \eqref{eq:comp3b}. 
Moreover, \eqref{eq:comp1b} and \eqref{eq:comp2b} are satisfied.   
Besides, 
$y_{1}=3$ from \eqref{eq:stat1b} and 
$y_{2}-10y_{3}=1$ from \eqref{eq:stat2b}.
Hence, the set of optimal solutions reads 
$x= \left(7, 3, 0, 0, 0, 0, \alpha \right), y = \left(3, 1+10\beta, \beta \right)$ with $\alpha \geq 3$ and $\beta \geq 0$. 
Fixing now this solution and considering the objective $\fpen$, thus, 
minimizing $(11y_{3} + x_{7}+31)/2$ with $x_{7} \geq 3$ and $y_{3} \geq 0$,
results in the efficient solution $x= \left(7, 3, 0, 0, 0, 0, 3 \right), y = \left(3, 1, 0 \right)$ for which $g=(0,0,30)$ and 
with optimal objective function value $\fpen=17$. 
Hence, $z^1=(0,17)$ is a nondominated point where the first component corresponds to $f$ and the second to $\fpen$.    
Note that it is by coincidence that this solution satisfies complementarity. 

In order to determine the second lexicographic minimum we first consider $\fpen$.
Since $y_{1}+y_{2} \geq 4$ from \eqref{eq:stat2b} and $y_{3}, g_{1}, g_{2},g_{3} \geq 0$, $f^{pen}\geq 2$ 
and $f^{pen}= 2$ is obtained for $y_{1}=3, y_{2}=1, y_{3}=0$ and $g_{1} = g_{2} = g_{3} = 0$.
This in turn implies
$x_{1}=10$ and $x_{2}=x_{7}=0$.
The values of $x_{3},x_{4},x_{5},x_{6}$ are not uniquely determined since these variables do not appear in the $L$-penalty objective. 
However, when (re-)optimizing 
the original objective over all solutions with $\fpen=2$, we obtain  
$x_{3}=x_{6}=3$, $x_{4}=x_{5}=0$.
Thus, $z^2=(6,2)$ with $f=6$ and $\fpen=2$ is a nondominated point 
with associated efficient solution $x=(10,0,3,0,0,3,0)$, $y=(3,1,0)$ and $g=(0,0,0)$.

The two lexicographic minima $z^1=(0,17)$ and $z^2=(6,2)$ give us a range of the outcomes. 
For any $L>0$ the value of $f$ will be in $[0,6]$ and the value of $\fpen$ in $[2,17]$. 
However, we do not need to test certain values of $L$ randomly. 
According to the procedure proposed in \citet{aneja79} we can use two adjacent nondominated points 
to derive a value for $L$ which enables us to search for a new nondominated point between the given two points in a systematic way. 
Based on $z^1=(0,17)$ and $z^2=(6,2)$ we compute
\[
L=\frac{z^2_{1}-z^1_{1}}{z^1_{2}-z^2_{2}}=\frac{6-0}{17-2}=0.4
\] 
From the solution of problem~\eqref{eq:penalty} with $L=0.4$ we numerically obtain the new nondominated point $z^3=(3,3.5)$. 
Again, we use pairs of adjacent nondominated points to update $L$.  
However, solving~\eqref{eq:penalty} with 
$L=(z^3_{1}-z^1_{1})/(z^1_{2}-z^3_{2})=2/9$ and
$L=(z^2_{1}-z^3_{1})/(z^3_{2}-z^2_{2})=2$, respectively, does not yield a new nondominated point.
Hence, the nondominated set consists of the three points $z^1=(0,17)$, $z^3=(3,3.5)$, $z^2=(6,2)$ and all convex combinations between the pairs of adjacent points $(z^1,z^3)$ and $(z^3,z^2)$.
A visualization 
is given in Figure~\ref{fig:exo1}. 

An overview of all possible values of $L$, the resulting objective function values and the solutions~$x$,$y$ as well as~$g$ is given in Table~\ref{tab:all}.   
\begin{table}
\centering
\[
 \begin{array}{c|cc|ccc}
  \toprule
L & f & f^{pen} & x & y & g\\
 \hline
 [0, \frac{2}{9}]  & 0 & 17 & 
 (7,3,0,0,0,0,3) &
 (3,1,0)&
 (0,0,30)
 \\[3pt]
 [\frac{2}{9},2] 
 & 3 & \frac{7}{2} & 
  (7,0,0,0,0,3,0) &
 (3,1,0)&
 (3,0,0)
 \\[3pt]
 [2,\infty) 
 & 6 & 2 & 
  (10,0,3,0,0,3,0) &
 (3,1,0)&
 (0,0,0)
\\
\bottomrule
\end{array}
\]
\caption{Solutions of the $L$-penalty problem~\eqref{eq:penalty} for different choices of $L$ with $K=10$. At the breakpoints $L=\frac{2}{9}$ and $L=2$ (infinitely many) alternative solutions exist.
}
\label{tab:all}
\end{table}
With increasing values of $L$ the minimization of~$\fpen$
becomes more important. 
Note that there exist infinitely many optimal solutions 
for $L=2/9$ and $L=2$. 
For example, for $L=2/9$, besides the two solutions indicated in Table~\ref{tab:all} all solutions $(x,y)$ with $x=(7,\lambda,0,0,0,3-\lambda,\lambda)$, $0 < \lambda <3,$
and $y=(3,1,0)$ are feasible and result in an overall objective function value of $34/9$.

\begin{figure}
\floatbox[{\capbeside\thisfloatsetup{capbesideposition={right,top},capbesidewidth=4cm}}]{figure}[\FBwidth]
{\caption{Nondominated set of Example~\ref{ex:1}.}\label{fig:exo1}}
{
\def\JPicScale{0.5} 
\psset{unit=\JPicScale mm}
\psset{linewidth=0.5,dotsep=1,hatchwidth=0.3,hatchsep=1.5,shadowsize=1,dimen=middle}
\psset{dotsize=2 2.5,fillcolor=black}
\psset{arrowsize=3,arrowlength=1,arrowinset=0.25,tbarsize=0.7 5,bracketlength=0.15,rbracketlength=0.15}

\psscalebox{.5}{%
\begin{pspicture}(10,10)(130,200)
\psline{->}(10,10)(130,10)
\psline{->}(10,10)(10,200)
\rput(125,0){\Large $f$}
\rput(-2,190){\Large$f^{pen}$}

\psline{*-}(10,180)(40,45)
\psline{*-*}(40,45)(70,30)
\rput(30,180){\Large$(0,17)^{\top}$}
\rput(60,50){\Large$(3,3.5)^{\top}$}
\rput(90,30){\Large$(6,2)^{\top}$}

\psline{-}(7,60)(13,60)
\psline{-}(7,110)(13,110)
\psline{-}(7,160)(13,160)
\rput(-2,60){\Large $5$}
\rput(-2,110){\Large$10$}
\rput(-2,160){\Large$15$}

\psline{-}(60,7)(60,13)
\psline{-}(110,7)(110,13)
\rput(60,-2){\Large$5$}
\rput(110,-2){\Large$10$}

\end{pspicture}
}
}
\end{figure}
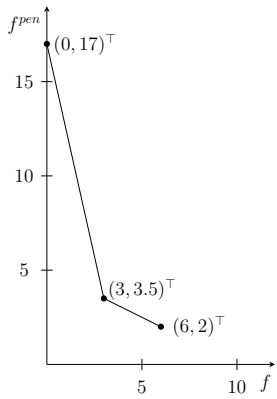

Further note that the optimal solution obtained for $L \in [0,\frac{2}{9})$ equals - by chance - the solution of the original complementarity problem~\eqref{eq:bil}. 
Thus, we state that for this particular problem the $L$-penalty approach and a variation of $L$ would not be required theoretically but complementarity constraint~\eqref{eq:comp} could simply be ignored. 
Nevertheless, this example serves to demonstrate the failure of Theorem 2 of \citet{Gabriel2013} which claims solutions 
satisfying complementarity for every $L>0$. In this example, however, for all $L \in  (\frac{2}{9},2)$ the optimal solution does not respect complementarity since $y_{1}>0$ and $g_{1}>0$.  

The two conflicting objectives are one reason for Theorem~2 to fail. 
We can not expect that complementarity is satisfied for every $L>0$.
A further issue is the fact that the $L$-penalty term as formulated in~\eqref{eq:penL} does not guarantee complementarity, in general.
For sufficiently large values of~$L$, solutions with $y$ and $g$ small 
are obtained, however, there is no guarantee to achieve complementarity as demonstrated by Example~\ref{ex:noL}.

\section{Obtaining solutions satisfying complementarity with Siddiqui and Gabriel's $L$-penalty approach} \label{sec:repair}

In Example~\ref{ex:noL} a solution satisfying complementarity exists, 
however, it is not optimal for \eqref{eq:penL2} for any $L>0$.  
Hence, the question arises under which conditions we can generate solutions satisfying complementarity with Siddiqui and Gabriel's $L$-penalty approach.
We have already stated that $\fpen$, which is non-negative by definition, is made smaller (all things being equal) with increasing values of $L$. 
In case a solution that corresponds to the minimum possible value of $\fpen$ satisfies complementarity we can expect it to be generated for a sufficiently large value of $L$.
Consider once more Example~\ref{ex:1} in which $\fpen \geq 2$ and the solution that realizes $\fpen=2$ satisfies complementarity. Indeed, this solution is generated for all $L>2$.
In Example~\ref{ex:noL}, $\fpen \geq 3$ but the solution that attains $\fpen=3$ is $y=4$, $g=2$, thus, does not satisfy complementarity. This is the reason why no solution for which the complementarity conditions hold is obtained with the $L$-penalty approach of \citet{Gabriel2013} in this example.     

In the following we show that a
lexicographic minimal solution with respect to $(\fpen,f)$ is generated for sufficiently large values of $L$
under certain conditions. 
In order to guarantee that parameter~$L$ is finite we need the notion of {\it proper efficiency} or {\it bounded trade-off} which is defined below for our specific bicriteria application. 
If the lexicographic minimal solution satisfies complementarity, we can then assure that the solution of the $L$-penalty approach satisfies complementarity for sufficiently large values of~$L$. 

\begin{definition}[Proper Efficiency/Nondominance]
A solution $(\xb,\yb) \in \Gamma$ 
of the bicriteria optimization problem~\eqref{bicrit}
is called {\it properly efficient} according to \citep{geoffrion68}
if it is efficient and 
if there exists a scalar $\bar{L}>0$ 
so that 
for all 
$(x,y) \in \Gamma$ satisfying 
$f(x,y)<f(\xb,\yb)$ 
and $\fpen(x,y)>\fpen(\xb,\yb)$   
\begin{equation} \label{tradeoff}
\frac{ f(\xb,\yb)-f(x,y)}{\fpen(x,y)-\fpen(\xb,\yb)} \leq \bar{L}
\end{equation}
and 
for all 
$(x,y) \in \Gamma$ satisfying 
$\fpen(x,y)<\fpen(\xb,\yb)$ 
and $f(x,y)>f(\xb,\yb)$   
\begin{equation} \label{tradeoff2}
\frac{ \fpen(\xb,\yb)-\fpen(x,y)}{f(x,y)-f(\xb,\yb)} \leq \bar{L}.
\end{equation}
\end{definition}
The quotients in \eqref{tradeoff} and \eqref{tradeoff2} are typically denoted as {\it trade-offs} between the two objectives \citep{chankong83}.

Figure~\ref{fig:th4} illustrates the notion of proper efficiency. 
In the subfigure on the right, the trade-off is not bounded in $z=(f(\xb,\yb),\fpen(\xb,\yb))$, while it is in the subfigure on the left which depicts a linear problem.  
Note that every efficient solution is properly efficient in the linear case.  
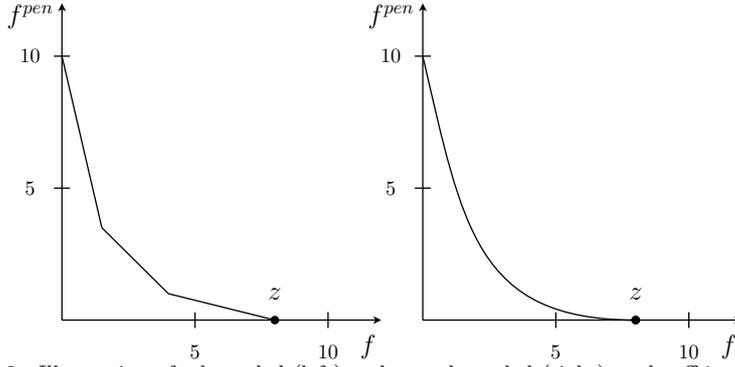
\begin{figure} 
\def\JPicScale{0.5} 
\psset{unit=\JPicScale mm}
\psset{linewidth=0.5,dotsep=1,hatchwidth=0.3,hatchsep=1.5,shadowsize=1,dimen=middle}
\psset{dotsize=2 2.5,fillcolor=black}
\psset{arrowsize=3,arrowlength=1,arrowinset=0.25,tbarsize=0.7 5,bracketlength=0.15,rbracketlength=0.15}

\begin{minipage}{.48\textwidth}
\hfill
\psscalebox{.7}{%
\begin{pspicture}(10,10)(130,130)
\psline{->}(10,10)(130,10)
\psline{->}(10,10)(10,130)
\rput(125,0){\Large $f$}
\rput(-2,125){\Large$f^{pen}$}

\psline{-}(10,110)(25,45)
\psline{-}(25,45)(50,20)
\psline{-*}(50,20)(90,10)
\rput(90,20){\Large$z$}

\psline{-}(7,60)(13,60)
\psline{-}(7,110)(13,110)
\rput(-2,60){\large $5$}
\rput(-2,110){\large$10$}

\psline{-}(60,7)(60,13)
\psline{-}(110,7)(110,13)
\rput(60,-2){\large$5$}
\rput(110,-2){\large$10$}
\end{pspicture}
}
\end{minipage}
\hfill
\begin{minipage}{.48\textwidth}
\psscalebox{.7}{%

\begin{pspicture}(10,10)(130,130)
\psline{->}(10,10)(130,10)
\psline{->}(10,10)(10,130)
\rput(125,0){\Large $f$}
\rput(-2,125){\Large$f^{pen}$}
\psline{-}(10,110)(10,110)
\psline{*-*}(90,10)(90,10)
\psbezier[showpoints=false]{-}%
(10,110)(25,45)(30,10)(90,10)
\rput(90,20){\Large$z$}

\psline{-}(7,60)(13,60)
\psline{-}(7,110)(13,110)
\rput(-2,60){\large $5$}
\rput(-2,110){\large$10$}

\psline{-}(60,7)(60,13)
\psline{-}(110,7)(110,13)
\rput(60,-2){\large$5$}
\rput(110,-2){\large$10$}
\end{pspicture}
}
\end{minipage}
\caption{
Illustration of a bounded (left) 
and 
an unbounded (right) trade-off in the nondominated point $z=(f(\xb,\yb),\fpen(\xb,\yb))$ 
} \label{fig:th4}
\end{figure}

\setcounter{theorem}{3}
\begin{theorem} \label{theo:new}
Assume that $\Omega$ and $\Gamma = \{(x,y) : (x,y) \in \Omega, y \ge 0,  g(x,y) \ge 0 \}$ 
are non-empty and compact. 
Let $(\xn,\yn) \in \Gamma$ be lexicographically minimal with respect to $(\fpen, f)$. 
Moreover, let $(\xn,\yn)$ be properly efficient with the trade-off bounded by a scalar $\bar{L}>0$.

Then $(\xn,\yn)$ is an optimal solution of~\eqref{eq:penL2} for every $L > \bar{L}$. 
Moreover, if $(\xn,\yn)$ satisfies complementarity
the optimal solution of~\eqref{eq:penL2} satisfies complementarity for every $L > \bar{L}$.
\end{theorem}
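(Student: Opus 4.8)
The plan is to verify optimality of $(\xn,\yn)$ for the weighted-sum problem~\eqref{eq:penL2} directly, by checking for every competitor $(x,y)\in\Gamma$ the inequality
\[
f(\xn,\yn)-f(x,y)\;\le\;L\,\bigl(\fpen(x,y)-\fpen(\xn,\yn)\bigr),
\]
which is equivalent to $f(\xn,\yn)+L\,\fpen(\xn,\yn)\le f(x,y)+L\,\fpen(x,y)$. First I would extract the two facts encoded in the hypothesis that $(\xn,\yn)$ is lexicographically minimal with respect to $(\fpen,f)$: it globally minimizes $\fpen$ over $\Gamma$, so that $\fpen(x,y)-\fpen(\xn,\yn)\ge 0$ for all $(x,y)\in\Gamma$; and among all minimizers of $\fpen$ it additionally minimizes $f$.

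The argument then proceeds by a case distinction on the two differences. If $\fpen(x,y)=\fpen(\xn,\yn)$, the right-hand side is zero and the lexicographic tie-break gives $f(\xn,\yn)\le f(x,y)$, so the inequality holds. If $\fpen(x,y)>\fpen(\xn,\yn)$ while $f(x,y)\ge f(\xn,\yn)$, the left-hand side is non-positive and the right-hand side positive, so again nothing is to be shown. The decisive case is $\fpen(x,y)>\fpen(\xn,\yn)$ together with $f(x,y)<f(\xn,\yn)$: such a point is precisely of the form to which the proper-efficiency bound~\eqref{tradeoff} applies (with $(\xn,\yn)$ in the role of $(\xb,\yb)$), the denominator there being strictly positive, so that $f(\xn,\yn)-f(x,y)\le\bar L\,\bigl(\fpen(x,y)-\fpen(\xn,\yn)\bigr)$. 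Because $L>\bar L$ and the factor $\fpen(x,y)-\fpen(\xn,\yn)$ is strictly positive, the right-hand side is in turn bounded by $L\,\bigl(\fpen(x,y)-\fpen(\xn,\yn)\bigr)$, which closes this case and hence proves the first assertion.

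For the second assertion the first part already suffices: $(\xn,\yn)$ is an optimal solution of~\eqref{eq:penL2} for every $L>\bar L$, and it satisfies complementarity by hypothesis, so the $L$-penalty problem admits an optimal solution satisfying complementarity. I would additionally note that the strict inequality $L>\bar L$ pins down the objective vector of any optimizer: equating the weighted-sum values of an optimal $(x,y)$ and of $(\xn,\yn)$ gives $f(\xn,\yn)-f(x,y)=L\,\bigl(\fpen(x,y)-\fpen(\xn,\yn)\bigr)$, and $\fpen(x,y)>\fpen(\xn,\yn)$ would make the trade-off ratio equal to $L>\bar L$, contradicting~\eqref{tradeoff}; hence $\fpen(x,y)=\fpen(\xn,\yn)$ and $f(x,y)=f(\xn,\yn)$, so every optimizer is itself a lexicographic minimum. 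This is exactly why the breakpoints of Table~\ref{tab:all}, where $L=\bar L$ and alternative solutions proliferate, are excluded by the strict bound.

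The main obstacle is isolating the decisive case and recognizing that the strictness $L>\bar L$, rather than $L\ge\bar L$, is what lets the trade-off bound be absorbed; everything else is sign-checking. The compactness of $\Omega$ and $\Gamma$, together with continuity of $g$ and hence of $\fpen$, plays only the supporting role of guaranteeing that both the lexicographic minimum and the minimum of~\eqref{eq:penL2} are attained, so that the comparison above is genuinely between optima.
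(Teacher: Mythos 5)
Your proof is correct and takes essentially the same route as the paper's: the decisive case $f(x,y)<f(\xn,\yn)$, $\fpen(x,y)>\fpen(\xn,\yn)$ is closed by combining the trade-off bound~\eqref{tradeoff} with the strict inequality $L>\bar{L}$, and lexicographic minimality with respect to $(\fpen,f)$ disposes of all remaining cases, exactly as in the paper (which merely packages the comparison via the constant-shifted objective $\modf$, a cosmetic difference). Your additional observation that any optimizer of~\eqref{eq:penL2} for $L>\bar{L}$ must share the objective vector $(f(\xn,\yn),\fpen(\xn,\yn))$, and hence is itself a lexicographic minimum, is a correct refinement beyond what the paper proves.
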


\begin{proof}
Since $(\xn,\yn) \in \Gamma$ is lexicographically minimal with respect to $(\fpen, f)$, it is efficient.
Moreover, by definition there is no  
$(x,y) \in \Gamma$ with $\fpen(x,y)<\fpen(\xb,\yb)$, hence, proper efficiency of this solution  
means that 
for all $(x,y) \in \Gamma$ satisfying 
$f(x,y)<f(\xn,\yn)$ and $\fpen(x,y)>\fpen(\xn,\yn)$ 
\[
\frac{ f(\xn,\yn)-f(x,y)}{\fpen(x,y)-\fpen(\xn,\yn)} \leq \bar{L}.
\]
We want to show that $(\xn,\yn)$ is an optimal solution of~\eqref{eq:penL2} for every $L > \bar{L}$.
Therefore, we consider the slightly modified objective function  
$$
\modf(x,y) = f(x,y) + L \cdot [\fpen(x,y)-\fpen(\xn,\yn) ] 
$$
which only differs from~\eqref{eq:penL2} by the constant term
$L \cdot \fpen(\xn,\yn)$, hence, yields the same optimal solution set.
Now, for every $(x,y) \in \Gamma$ 
with $f(x,y)<f(\xn,\yn)$ and $\fpen(x,y)>\fpen(\xn,\yn)$ 
and for every $L > \bar{L}>0$
\begin{align*}
\tilde{f}(x,y) &= f(x,y) + L \cdot [\fpen(x,y)-\fpen(\xn,\yn) ] \\
& >  f(x,y) + \bar{L} \cdot [\fpen(x,y)-\fpen(\xn,\yn) ] \\
&\geq f(x,y) + \frac{ f(\xn,\yn)-f(x,y)}{\fpen(x,y)-\fpen(\xn,\yn)} \cdot [\fpen(x,y)-\fpen(\xn,\yn) ] \\
& = f(\xn,\yn) \\ 
&= f(\xn,\yn) + L \cdot 0 \\
&= f(\xn,\yn) + L \cdot [\fpen(\xn,\yn)-\fpen(\xn,\yn) ]\\
&= \tilde{f}(\xn,\yn). 
\end{align*}
Since $(\xn,\yn)$ is lexicographically minimal with respect to $(\fpen, f)$, there is no 
$(x,y) \in \Gamma \backslash \{(\xn,\yn)\}$ with $\fpen(x,y)<\fpen(\xn,\yn)$
or $\fpen(x,y)=\fpen(\xn,\yn)$ and $f(x,y)<f(\xn,\yn)$. 
Hence, $(\xn,\yn)$ is minimal for~\eqref{eq:penL2} for every $L > \bar{L}>0$.   
Clearly, if $(\xn,\yn)$ satisfies complementarity,
an optimal solution of~\eqref{eq:penL2} satisfies complementarity for every $L > \bar{L}$.
\qed
\end{proof}

\begin{remark}
An optimal solution of \eqref{eq:penL2} obtained under the assumptions of Theorem~\ref{theo:new}  is not necessarily optimal for the original complementarity problem \eqref{eq:mpecorig}. As can be seen from Example~\ref{ex:1} there might be solutions satisfying complementarity with a smaller value of $f$.     
According to Table~\ref{tab:all} the lexicographic minimum with respect to $(\fpen,f)$ is
$\xn=(10,0,3,0,0,3,0)$, $\yn=(3,1,0)$ with corresponding $g=(0,0,0)$ and (original) objective function value $f(\xn,\yn)=6$. 
However, the solution  $x=(7,3,0,0,0,0,3)$, $y=(3,1,0)$ with corresponding $g=(0,0,30)$ also satisfies complementarity and yields $f(x,y)=0$.  
\end{remark}

The next example illustrates Theorem~\ref{theo:new}. 
\begin{example}[Complementarity for $L>2$] \label{ex:3}
Consider 
\begin{equation} \label{eq:counterexo2}
\begin{array}{ll} 
\min & -x_{3} - y_{2} \\
\mbox{s.t.} & x_{3} \leq  20\\
 & y_{2} \leq  10\\
& g_{1}(x,y)=10-x_{1}-x_{2} \geq 0\\
& g_{2}(x,y)=x_{3}-x_{2} \geq 0\\
& y_{1}, y_{2} \geq 0\\
& g^{\top} y =0\\
& x_{1}, x_{2}, x_{3} \geq 0
\end{array}   
\end{equation}
and the associated objective of the $L$-penalty formulation 
\begin{eqnarray} \label{eq:Lpenobj3}
f + L \cdot \fpen
&&= -x_{3} - y_{2} + \frac{L}{2} (y_{1} + y_{2} + 10-x_{1}-x_{2}  + x_{3}-x_{2})  \\ 
&& =  \frac{L}{2} y_{1} + \left( \frac{L}{2}-1 \right)y_{2} -\frac{L}{2}x_{1}-Lx_{2}  + \left( \frac{L}{2}-1 \right)x_{3} + 5L \notag
\end{eqnarray}
Note that the vectors $x=(0,10, 10)$, $y=(0,0)$  with corresponding $g_{1}(x,y)=0=g_{2}(x,y)$ are feasible for 
\eqref{eq:counterexo2}. 
It can be easily verified that this solution, which satisfies complementarity, represents the unique lexicographic minimal solution with respect to $(\fpen,f)$.
Since the problem is linear, every efficient solution and in particular 
the lexicographic minimum is properly efficient.
Hence, the assumptions of Theorem~\ref{theo:new} hold and there must exist a finite~$\bar{L}>0$ such that 
for every $L > \bar{L}$
an optimal solution of~\eqref{eq:penL2} equals the lexicographic minimum.
Moreover, since this solution satisfies complementarity, an optimal solution of~\eqref{eq:penL2} satisfies complementarity.

Indeed, for $L > 2$, $y_{1}, y_{2}$ and $x_{3}$ are chosen as small as possible and $x_{1}, x_{2}$ as large as possible (within their bounds), where the higher weight is given to $x_{2}$. 
Hence, the lexicographic minimal solution $x=(0,10, 10)$, $y=(0,0)$ with $g=(0,0)$ is optimal for all $L>\bar{L}$ with
$\bar{L}=2$.  

For $L\in[0,2)$ there is an incentive in~\eqref{eq:Lpenobj3} to choose $y_{2}$ and $x_{3}$ as large as possible. Thus, an optimal solution for $L\in[0,2)$ is
$x=(0,10,20), y=(0,10), g=(0,10)$. 
This solution does not satisfy complementarity. 
For $L=2$, a solution for which the complementarity conditions hold might be obtained but is not enforced by objective~\eqref{eq:Lpenobj3}. 
\end{example}

Finally, note that the assumption that a lexicographic minimal solution with respect to $(\fpen,f)$ satisfies complementarity
is sufficient but not necessary for the $L$-penalty method to work, in general.
To illustrate this consider the non-linear example presented below motivated from Section~3 of \citet{Gabriel2013}, 
which shows that the $L$-penalty method might work successfully also for selected values $L>0$ despite 
all resulting solutions with $\fpen>0$ are not lexicographically minimal with respect to $(\fpen,f)$.
\begin{example}[Complementarity  for every $L \geq 0$] \label{ex:origpaper}
Consider
\begin{alignat}{2}
\min \; & (y_{1}+y_{2}+x-12) \cdot x  \notag \\
s.t. \; & g_{1}(x,y) = x+2y_{1}+y_{2}-12  \geq 0 \notag \\
&g_{2}(x,y) = x+y_{1}+2y_{2}-12  \geq 0 \label{prob:origpaper}\\
&y^{\top} g = 0 \notag \\
&x,y_{1},y_{2} \geq 0. \notag
\end{alignat}
The corresponding $L$-penalty term reads
$$
f^{pen} = 2(y_{1}+y_{2})+x-12.
$$
Table~\ref{tab:allexo2} summarizes the (numerical) results for different values of~$L$. 
Again, we see that the minimization of the second objective~$f^{pen}$ gains importance for increasing values of~$L$.
Since $g=0$ is optimal for all evaluated $L \geq 0$,  complementarity holds for all these $L$, however, $\fpen=0$ is not satisfied at optimality for those sampled values of $L\leq 10$.
\end{example}

\begin{table}

\centering
\[
 \begin{array}{l|cc|cccc}
  \toprule
L & f & f^{pen} & x & y  & g\\
 \hline
 0  & -12 & 2 & 6 & (2,2) & (0,0) \\ 
 0.0001   & -12 & 2 & 6 & (2,2) & (0,0) \\ 
  1  & -11.92 & 1.83 & 6.5 & (1.83,1.83) & (0,0) \\ 
 10  & -3.67 & 0.33 & 11 & (0.33,0.33) & (0,0) \\ 
 100  & 0 & 0 & 12 & (0,0) & (0,0) \\ 
\bottomrule
\end{array}
\]
\caption{Numerical solutions of the $L$-penalty formulation of problem~\eqref{prob:origpaper} for different choices of $L$. 
All these solutions satisfy complementarity. 
}
\label{tab:allexo2}
\end{table}

\end{example}

As a practical consequence we propose to first remove the complementarity conditions from 
the original problem~$(3^{\star})$ 
and solve the resulting problem.
In case one obtains a solution satisfying complementarity as in Examples~\ref{ex:1} and~\ref{ex:origpaper} there is no need to introduce an $L$-penalty formulation or any other approach to reformulate the complementarity condition.

\section{Conclusions} \label{sec:conclusions}

This corrigendum shows with the help of appropriate counter-examples that Theorems~2 and~3 in \citet{Gabriel2013} do not hold, in general.
A bicriteria analysis helps to understand how the proposed $L$-penalty method works.
In particular, the parameter~$L$ can be interpreted as the weight of the penalty term with respect to the original objective function.
We present a new theorem that guarantees that a solution of the $L$-penalty formulation 
satisfies complementarity for sufficiently large values of $L$ under the condition that the minimal value of the penalty term is attained by a solution 
satisfying complementarity
and that the trade-off at this solution is bounded.
As also shown there are instances in which solutions for which the complementarity conditions hold exist but are not accessible by the $L$-penalty method of \citet{Gabriel2013} for any $L \geq 0$. 
Approaches for generating them are left as a subject for future research.

\bibliographystyle{spbasic}      

\end{document}